\DeclarePairedDelimiter{\ceil}  {\lceil } {\rceil }
\DeclarePairedDelimiter{\floor} {\lfloor} {\rfloor}
\DeclarePairedDelimiter{\fract} {\{     } {\}     }
\DeclarePairedDelimiter{\abs}   {\lvert } {\rvert }
\newcommand{\rnk}[2]  {\floor{ \sqrt[#2] {#1}}}
\newcommand{\lnl}[2]  {\floor{ \log_{#1} {#2}}}
\newcommand{\fknl}[3] {f_{#1} \left( #2, #3 \right) }
\newcommand{\Fkxl}[3] {F_{#1} \left( #2, #3 \right) }
\newcommand{\sto}[2]  {\genfrac{[} {]} {0pt} {} {#1} {#2} }
\newtheorem{myTheorem}{Theorem}[section]
\newtheorem{myLemma}{Lemma}[section]
\newtheorem{myCorollary}{Corollary}[section]
\numberwithin{equation}{section}
\title{Ordered Factorizations with $k$ Factors}
\author{Jacob Sprittulla \\ sprittulla@alice-dsl.de}
\date{\today}
\begin{document}
	
\maketitle
	
\begin{abstract}
	We give an overview of combinatoric properties of the number of ordered $k$-factorizations $f_k(n,l)$ of an integer, where every factor is greater or equal to $l$. We show that for a large number $k$ of factors, the value of the cumulative sum $F_k(x,l)=\sum\nolimits_{n\leq x} f_k(n,l)$ is a polynomial in $\lnl{l}{x}$ and give explicit expressions for the degree and the coefficients of this polynomial. An average order of the number of ordered factorizations for a fixed number $k$ of factors greater or equal to 2 is derived from known results of the divisor problem. 
\end{abstract}
	
\section{Introduction}
	We study the number of ordered factorizations 
	$f_k(n,l) = \# \{   (i_1,\dots,i_k) \geq l, i_1 \cdots i_k = n\}$ 
	of a positive integer $n$ with exactly $k$ factors greater or equal to $l$, where factorizations with the same factors in different orders are considered to be different. Here $\# \{ \cdots \}$ denotes the cardinality of a set. For example, for $n=12$, $l=2$ and $k=1,2,3$ we have 
	\begin{align*}
		f_1(12,2) & =1 = \# \fract{(12)}                    \\
		f_2(12,2) & =4 = \# \fract{(2,6),(6,2),(3,4),(4,3)} \\
		f_3(12,2) & =3 = \# \fract{(2,2,3),(2,3,2),(2,2,3)} \text{.}
	\end{align*}
	
	We are mainly interested in the cases $l=1$ and $l=2$, but some properties rely on the recursive structure of the functions $f_k(n,l)$ for $l > 2$ (see theorem~\ref{th:combi} below), so that it is useful to treat the minimal admissible value $l$ for the factors as a separate parameter. In some studies, cf. \cite{Hwa11} or \cite{War93} for example, the set of admissible factors is further constrained, but we restrict ourselves to the case of factors greater or equal to a minimal value $l$. 
	
	To simplify the notation, we omit the parameter $l$ for $l=1$ and $l=2$ and use the notations $d_k(n):=f_k(n,1)$ and $f_k(n):=f_k(n,2)$. We denote the corresponding summatory functions with capital letters and write
	$F_k(x,l) := \sum\nolimits_{n\leq x} f_k(n,l)$, 
	$F_k(x)   := \sum\nolimits_{n\leq x} f_k(n)$ and 
	$D_k(x)   := \sum\nolimits_{n\leq x} d_k(n)$ 
	for real $x \geq 1$.
	
	Properties of ordered factorizations have a long history in the mathematical literature. We refer to \cite{Kno05} and \cite[section 4]{Kla07} for good overviews. 
	
	An explicit formula for $f_k(n)$ was given by MacMahon in \cite{Mac93}, compare also \cite{Kuh50}. If the prime factorization of an integer $n$ is given by 
	$n=p_1^{e_1} p_2^{e_2} \cdots p_{\omega(n)}^{e_{\omega(n)}}$, 
	where $\omega(n)$ denotes the number of distinct prime factors of $n$, MacMahon's explicit formula is given by 
	\begin{equation} 
	\label{eq:Mac}
		f_k(n) = \sum_{i=0}^{k-1} (-1)^i \binom{k}{i} 
		         \prod_{j=1}^{\omega(n)} \binom{e_j+k-i-1}{e_j} 
		         \text{.}
	\end{equation}
	This formula in combination with \eqref{eq:dnk} below can also be used to calculate $d_k(n)$ explicitly. 
		
	Most of the studies of ordered factorizations focus on the cumulative function 
	$f(n):=\sum\nolimits_{k=1}^{\infty} f_k(n)$
	counting all ordered factorizations, also called the Kalmar function. Kalmar in~\cite{Kal32} proved an asymptotic of the form
	\begin{align}
		F(x):= \sum_{n \leq x} f(n)
		     = K x^\rho + \bigtriangleup(x)  \text {,}
		\label{eq:kal} 
	\end{align}
	where the parameters of the main term are given by 
	$\rho = \zeta^{-1}(2) \approx 1.7286$ and 
	$K    = - \left( \rho \zeta'(\rho) \right)^{-1} \approx 0.31817$ and 
	$\zeta(\cdot)$ denotes the Riemann zeta function. The order of the error term in \eqref{eq:kal} has been improved in several steps, the currently best known result is given in~\cite{Hwa00}. 
	
	Lower and upper bounds for $f(n)$ are studied in \cite{Cho00}, \cite{Cop05} and \cite{Kla07}. In \cite{Del08} results are given for $f$-champions, i.e. integers $N$ for which $f(N) > f(n)$ for all $n < N$.
	
	The functions $f_k(n)$ resp. $F_k(x)$ are explicitly treated in \cite{Hwa00}, \cite{Hwa11} and \cite{Lau01}. In \cite{Hwa11} a central limit theorem for $F_k(x,l)$ for $x \rightarrow \infty$ is proven\footnote{In fact, the result proven in \cite{Hwa11} is more general, since it covers factorizations with  constraints.}. Results on the average order of $f_k(n)$ for $k \geq 2$ are given in \cite{Hwa00} and \cite{Lau01}. We come back to these results in section~\ref{sec:avgorder} below. 

	It is worth mentioning that the functions $f_k(n)$ and $F_k(x)$ are directly connected to some of the most important arithmetical functions. We denote by $\mu(n)$ the Moebius function, by $M(x)=\sum_{n \leq x} \mu(n)$ the Mertens function, by $\Lambda(n)$ the van Mangoldt function and by 
	$\Pi(x)=\sum_{n \leq x} \frac{\Lambda(n)}{\log n}$ the Riemann prime counting function. We have for $n,x \geq 1$ (see \cite[chapter 17.2]{Fri10})
	\begin{align}
		\mu(n) &= \sum_{k=0}^{\log_2 n} (-1)^k f_k(n)   
		\label{eq:fmoe}                                    \\
		 M(x)  &= \sum_{k=0}^{\lnl{2}{x}} (-1)^k F_k(x)   
		\label{eq:fmer} 									\\
		\frac{\Lambda(n)}{\log n} &= \sum_{k=1}^{\log_2 n} 
		         \tfrac{1}{k} (-1)^{k+1} f_k(n)   \\
		\Pi(x)   &= \sum_{k=1}^{\lnl{2}{x}} 
	              \tfrac{1}{k} (-1)^{k+1} F_k(x)  \text{,} 
	\end{align}
	where the conventions of \eqref{eq:fnkl0} and \eqref{eq:Fnkl0} below for values at $k=0$ are used. From equation \eqref{eq:fmer} it follows that the Mertens function at $x$ can be regarded as the surplus of the number of factorizations of integers smaller or equal to $x$ with an even number of factors over the number of factorizations with an odd number of factors.
	
	The aim of this paper is threefold. First, we want to give a systematic overview of the recursive structure of the quantities $F_k(x,l)$ and $f_k(n,l)$. We do not claim that any of the given formulas is new, but a complete overview does not seem to exist in the literature. Recursive formulas are covered in section \ref{sec:formulas}.

	In section \ref{sec:polynomial} we exploit the recursive structure of $F_k(x,l)$ to derive explicit polynomial type formulas when the number of factors $k$ is near its maximum value $\lnl{l}{x}$, for $l \geq 2$. Our results generalize an observation in \cite[section 8]{Hwa00}. 
	
	In section \ref{sec:avgorder} we consider the average order of $f_k(n)$ for fixed $k$. Although the results given here are straightforward implications of well known asymptotics of the divisor problem and the fact that $D_k(x)$ is the binomial transform of $F_k(x)$ (see \eqref{eq:Dnk} below), it seems that the resulting average orders for $f_k(n)$ haven't yet been discussed in the literature. 
	
	\textbf{Notations:} $i,j,k,l,n,m$ always denote positive integers, $x, y, u, v, w$ real numbers and $s, z$ complex numbers. We write $\sigma_s$ for the real part of $s$. As usual, $\floor{x}$ denotes the floor function (the greatest integer smaller than $x$), $\ceil{x}$ denotes the ceiling function (the smallest integer greater than $x$) and $\fract{x}=x-\floor{x}$ denotes the fractional part of $x$. The Riemann zeta function is denoted by $\zeta(s)$. We also use the notation $\zeta_l(s)=\sum_{n=l}^{\infty} n^{-s}$  ($\sigma_s > 1$) for the truncated Riemann zeta function. Empty sums are considered to be zero.
	
\section{Combinatoric identities for $f_k(n,l)$ and $F_k(x,l)$}
\label{sec:formulas}
	We first note that, since $l^k > n$ for $k > \lnl{l}{n}$ or $l > \rnk{n}{k}$, we have for $k,l \geq 2$
	\begin{align}
	\label{eq:f0}
		f_k(n,l) = F_k(n,l) = 0 \quad 
		\text{for} \quad k > \floor{\log_l n} \quad
		\text{or}  \quad l > \rnk{n}{k}
		\text{.}
	\end{align}
	We also have $f_k(n,l)=0$ for $k > \Omega(n)$, where $\Omega(n) \leq \lnl{2}{n}$ denotes the total number of prime factors of $n$.
	
	For $k=1$ we have
	\begin{align}
	\label{eq:f1}
		f_1(n,l) = 
		\begin{cases}
			0 & \text{for} \quad  n <     l \\
			1 & \text{for} \quad  n \geq  l
		\end{cases}	
		\text{,}  \qquad
		F_1(x,l) = \left( \floor{x} - l + 1 \right)^+
		\text{,}
	\end{align}
	where $y^+ := \max(0,y)$. 
	From the definition it is clear, that for $n,x \geq 1$ and $k,l \geq 1$ 
	\begin{align}
		f_k(n,l) &= \sum_{\substack{i=l \\ i \lvert n}}^n f_{k-1}(n/i,l) 
		\text{,} \qquad  
		f_0(n,l) =
		\begin{cases}
			1 & \text{for} \quad  n=1 \\
			0 & \text{for} \quad n \geq  2
		\end{cases}
		\label{eq:fnkl0}    \\  
		F_k(x,l) &= \sum_{i=l}^{n} F_{k-1}(x/i,l)
		\text{,} \qquad  F_0(x,l) = 1
		\text{.}
		\label{eq:Fnkl0}    
	\end{align}
	
	For concrete calculations, these recursive expressions are of limited use due to their computational extensiveness. Note that \eqref{eq:fnkl0} can be written as $f_k(n,l) = f_{k-1}(n,l) \ast f_{1}(n,l)$, where~$\ast$ denotes Dirichlet convolution. If we denote by $\mathcal{F}_{k,l}(s)$ the Dirichlet generating function of $f_k(n,l)$, it follows that 
	$\mathcal{F}_{k,l}(s) = \mathcal{F}_{k-1,l}(s) \zeta_l(s)$
	and therefore, for $k,l \geq 1$ and $\sigma_s>1$ (compare, for example~\cite{Hwa11})
	\begin{align}
		\mathcal{F}_{k,l}(s) &= \sum_{n=1}^\infty f_k(n,l) n^{-s}  
				   = \zeta_l(s)^k
				   \text{.}
		\label{eq:dirf}
	\end{align}
	
	By uniqueness of the coefficients of the Dirichlet series, equation \eqref{eq:dirf} can serve as a definition of $f_k(n,l)$ (see \cite{Hwa00}, for example).
	
	In some circumstances it might be useful to use the hyperbola method (cf. \cite[Theorem~I.3.1]{Ten95}) for concrete calculation of $F_k(x,l)$. For $uv=x$, $l \geq 2$ and $0 \leq j \leq k$, we use $f_k(n,l) = f_{k-j}(n,l) \ast f_{j}(n,l)$ and \eqref{eq:Fnkl0} to get
	\begin{equation*} 
	F_k(x,l) = \sum_{i=1}^{u} F_{k-j}(x/i,l) f_j(i,l) +
			   \sum_{i=1}^{v} F_{j}  (x/i,l) f_{k-j}(i,l) - 
	           F_j(u,l) F_{k-j}(v,l) 
			   \text{.}
	\end{equation*}
	
	This allows, for example, an efficient calculation of $F_{2k}(n,l)$ if $F_{k}(i,l)$ for $i=1,\dots,n$ is already known:
	\begin{equation*} 
		F_{2k}(n,l) = 2 \sum_{i=1}^{\floor{\sqrt[k]{n}}} 
		F_k(n/i,l) f_{k}(i,l) 
		- F_k \left( \floor{\sqrt[k]{n}},l \right) ^2 \text{.}
	\end{equation*}
	
	Another useful special case is the relation 
	$F_2(n,l)= 2 \sum_{i=1}^{\floor{\sqrt{n}}} \floor{n/i} 
	           - \floor{\sqrt{n}}^2+(l-1)^2$.
	
	The following theorem covers the recursive structure of the functions $f_k(n,l)$ and $F_k(x,l)$.
	\begin{myTheorem} 
	\label{th:combi}
		For $x,n \geq 1$ and $k,l \geq 1$ we have
		\begin{align} 
			f_k(n,l) & =\sum_{\substack{i=0 \\ l^i \lvert n}}^{k} \binom{k}{i}  \fknl{k-i}{\frac{n}{l^i}}{l+1} 
			\label{eq:fnkl1}  \\
			F_k(x,l) & =\sum_{i=0}^{k} \binom{k}{i} 
			\Fkxl{k-i}{\frac{x}{l^i}}{l+1}
			\text{.}
			\label{eq:Fnkl1}  
		\end{align}
		
		Further, for $x,n,l$ as above and $k \geq 2$ we have
		\begin{align} 
			f_k(n,l) & = \sum_{m=l}^{\rnk{n}{k}} 
			\sum_{\substack{i=1 \\ m^i \lvert n}}^{k} \binom{k}{i} 
			\fknl{k-i}{\frac{n}{m^i}}{m+1} 
			\label{eq:fnkl2}  \\
			F_k(x,l) & =\sum_{m=l}^{\rnk{x}{k}} \sum_{i=1}^{k} \binom{k}{i} \Fkxl{k-i}{\frac{x}{m^i}}{m+1} 
			\text{.}
			\label{eq:Fnkl2}   
		\end{align}	
	\end{myTheorem}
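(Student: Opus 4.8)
The plan is to prove the four identities by the same combinatorial mechanism: classify each ordered factorization according to how many of its factors equal the smallest admissible value $l$, respectively according to the smallest factor actually occurring. For \eqref{eq:fnkl1}, fix a factorization $(i_1,\dots,i_k)$ counted by $\fknl{k}{n}{l}$, let $S \subseteq \{1,\dots,k\}$ be the set of positions $t$ with $i_t = l$, and let $i = |S|$. Removing those $i$ entries leaves an ordered $(k-i)$-tuple all of whose entries are $\geq l+1$, with product $n/l^i$; conversely, any such tuple together with a choice of the $\binom{k}{i}$ positions for the $l$'s reconstructs the original factorization. Summing over $i$ from $0$ to $k$ (the term vanishes unless $l^i \mid n$, which is why that divisibility condition decorates the sum) gives \eqref{eq:fnkl1}. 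The summatory version \eqref{eq:Fnkl1} then follows either by summing \eqref{eq:fnkl1} over $n \leq x$ — noting $\sum_{n \leq x,\, l^i \mid n} g(n/l^i) = \sum_{m \leq x/l^i} g(m)$ — or, more cleanly, by running the identical partition-of-positions argument directly on tuples with $i_1\cdots i_k \leq x$.

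For \eqref{eq:fnkl2} and \eqref{eq:Fnkl2} the idea is the same but the marker is the \emph{minimum} factor. Given a factorization counted by $\fknl{k}{n}{l}$ with $k \geq 2$, let $m = \min_t i_t$; since all $k \geq 2$ factors are $\geq m$ and their product is $n$, we have $l \leq m \leq \rnk{n}{k}$. Let $i \geq 1$ be the number of positions achieving the minimum $m$; deleting them leaves an ordered $(k-i)$-tuple with all entries $\geq m+1$ and product $n/m^i$, i.e. something counted by $\fknl{k-i}{n/m^i}{m+1}$, and the $\binom{k}{i}$ accounts for the positions of the $m$'s. Summing over the admissible $m$ and over $i$ from $1$ to $k$ recovers $\fknl{k}{n}{l}$, giving \eqref{eq:fnkl2}; \eqref{eq:Fnkl2} follows by the same summation-over-$n\le x$ step, or directly. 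One should check the two degenerate cases: if $k - i = 0$ the inner summand is $\fknl{0}{1}{m+1}$ via the convention \eqref{eq:fnkl0}, contributing exactly when $n = m^k$; and the requirement $k\geq 2$ is what guarantees the minimum factor cannot exceed $\rnk{n}{k}$, so the outer sum has the stated range.

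I would present \eqref{eq:fnkl1} in full detail as the model argument, then note that \eqref{eq:Fnkl1} is obtained by summing over $n \leq x$ (or by repeating the argument with $\leq x$ in place of $=n$), and finally treat \eqref{eq:fnkl2}–\eqref{eq:Fnkl2} as the refinement in which one additionally stratifies by the value of the minimal factor. The main thing to be careful about — the only real obstacle, and it is a bookkeeping one rather than a conceptual one — is making the bijection airtight at the boundary: ensuring the divisibility conditions $l^i \mid n$ and $m^i \mid n$ are exactly the conditions under which a term is nonzero, handling the empty/$k-i=0$ tuple through the stated conventions, and verifying that in \eqref{eq:fnkl2} no factorization is double-counted (each has a \emph{unique} minimum value $m$ and a well-defined multiplicity $i$ of that minimum). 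Once the partition of the index set into "minimal positions" and "the rest" is set up, the identities drop out, and no estimates are involved.
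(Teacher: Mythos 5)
Your proof of \eqref{eq:fnkl1} and \eqref{eq:Fnkl1} is essentially the paper's own argument: classify factorizations by the number $i$ of factors equal to $l$, pull out the $\binom{k}{i}$ choices of positions, and identify the remainder with a $(k-i)$-factorization of $n/l^i$ into factors $\geq l+1$; the paper formalizes this via the auxiliary counts $f_{k,i}(n,l)$ and $F_{k,i}(x,l)$ and also handles $F$ by a direct tuple count rather than by summing over $n\leq x$, but these are cosmetic differences. For \eqref{eq:fnkl2} and \eqref{eq:Fnkl2} you genuinely diverge: you give a one-shot bijective proof, stratifying each factorization by the value $m$ of its minimal factor and the multiplicity $i\geq 1$ of that minimum, whereas the paper derives these identities \emph{algebraically} from \eqref{eq:Fnkl1} by repeatedly splitting off the $i=0$ term $F_k(x,l+1)$ and re-expanding it with $l$ replaced by $l+1$, telescoping until the leading term vanishes by \eqref{eq:f0} at $m=\rnk{x}{k}$. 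Your route is arguably more illuminating (it explains \emph{why} the double sum counts each factorization exactly once — unique minimum, well-defined multiplicity) and it correctly identifies the same boundary issues the paper's iteration silently relies on: the vanishing of terms with $m^k>n$, the $k-i=0$ convention, and the bound $m\leq\rnk{n}{k}$. The paper's route buys a shorter write-up once \eqref{eq:Fnkl1} is established and makes the recursive structure in $l$ explicit, which is what gets reused later. Both are correct; your justification of the interchange of summation in passing from \eqref{eq:fnkl2} to \eqref{eq:Fnkl2} (extending the range of $m$ to $\rnk{x}{k}$ because the extra terms vanish) is the only step you should spell out rather than wave at.
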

	\begin{proof}
		We first give a combinatoric proof of \eqref{eq:fnkl1} and \eqref{eq:Fnkl1}. The basic idea is the separation of factors equal to $l$. For fixed $n,k,l$ and $0 \leq i \leq k$, we denote by $f_{k,i}(n,l)$ the number of factorizations of $n$, where all $k$ factors are greater or equal to $l$ and exactly $i$ factors are equal to $l$. If $l^i$ divides $n$, we have 
		\begin{equation*}
			f_{k,i}(n,l) = 
			\binom{k}{i}  f_{k-i} \left( \frac{n}{l^i},l+1 \right) \text{,}
		\end{equation*}	
		because every factorization counted by $f_{k,i}(n,l)$ can be split into $i$ factors equal to $l$ and $k-i$ factors greater or equal to $l+1$. 
		
		A simular argument gives
		\begin{equation*}
			F_{k,i}(x,l) = 
			\binom{k}{i}  F_{k-i} \left( \frac{x}{l^i},l+1 \right) 
		\end{equation*}	
		for $0 \leq i \leq k$, where $F_{k,i}(x,l):=\sum_{n \leq x} f_{k,i}(n,l)$ counts all factorizations of integers less or equal to $x$, with $k$ factors, where $i$ factors are equal to $l$ and $k-i$ are greater or equal to $l+1$. Finally we get \eqref{eq:fnkl1} and \eqref{eq:Fnkl1} from 		
		$f_k(n,l) = \sum_{i=0}^{k} f_{k,i}(n,l)$ and
		$F_k(x,l) = \sum_{i=0}^{k} F_{k,i}(x,l)$. 
		 
		We proceed to show \eqref{eq:Fnkl2}, by subsequent elimination of the first term of the right hand side of \eqref{eq:Fnkl1}. More precisely, we separate the first term in the sum of \eqref{eq:Fnkl1} and apply \eqref{eq:Fnkl1} again (with $l+1$ as second argument of $F_k(\cdot)$) to this term to get  
		\begin{align*}
			F_k(x,l) &= F_{k}\left( x,l+1 \right) + \sum_{i=1}^{k} \binom{k}{i} F_{k-i}\left( \frac{x}{l^i},l+1 \right) \\
			&= 	\sum_{i=0}^{k} \binom{k}{i} F_{k-i}\left( \frac{x}{(l+1)^i},l+2 \right) + \sum_{i=1}^{k} \binom{k}{i} F_{k-i}\left( \frac{x}{l^i},l+1 \right)  \\
			&= F_{k}\left( x,l+2 \right) + \sum_{m=l}^{l+1} \sum_{i=1}^{k} \binom{k}{i} F_{k-i}\left( \frac{x}{m^i},m+1 \right) 
			\text{.}
		\end{align*}
		Repeating the above operation $j$-times yields
		\begin{align*}
			F_k(x,l) 	&= F_{k}\left( x,l+j+1 \right) + \sum_{m=l}^{l+j} \sum_{i=1}^{k} \binom{k}{i} F_{k-i}\left( \frac{x}{m^i},m+1 \right) 
			\text{.}
		\end{align*}
		Setting $j = \rnk{n}{k} - l$ and using \eqref{eq:f0}	we get \eqref{eq:Fnkl2}. \\
		An analogous argument yields \eqref{eq:fnkl2}.
		This completes the proof.
	\end{proof} 
	For practical purposes, the performance of the recursions of theorem \ref{th:combi} is in most parameter constellations much better than the performance of the recursions \eqref{eq:fnkl0} and \eqref{eq:Fnkl0}. However, for large values of $n,x$ the recursions tend to be numerically unstable.
	
	The case $l=1$ of theorem~\ref{th:combi} connects $F_k(x)$ and $D_k(x)$, respectively $f_k(n)$ and $d_k(n)$ .
	\begin{myCorollary} \label{co:combi1}
		For $x,n \geq 1$ and $k\geq 0$ we have
		\begin{align}
		D_k(x) &= \sum_{i=0}^{\floor{\log_2 x }}      \binom{k}{i} F_i(x)        \label{eq:Dnk} \\
		F_k(x) &= \sum_{i=0}^{k}      (-1)^{k-i} \binom{k}{i} D_i(x) 
		\label{eq:Fnk} \\
		d_k(n) &= \sum_{i=0}^{\floor{\log_2 n }}      \binom{k}{i} f_i(n)        \label{eq:dnk} \\
		f_k(n) &= \sum_{i=0}^{k}      (-1)^{k-i} \binom{k}{i} d_i(n)  
		\label{eq:fnk} 
		\text{.}
		\end{align}
	\end{myCorollary}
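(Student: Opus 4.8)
The plan is to obtain Corollary~\ref{co:combi1} as the special case $l=1$ of Theorem~\ref{th:combi}, combined with a Möbius-type inversion of the resulting binomial relations. First I would specialize \eqref{eq:Fnkl1} to $l=1$. Since $l^i = 1$ for every $i$, the argument $x/l^i$ collapses to $x$, and the second parameter becomes $l+1 = 2$, so $F_{k-i}(x/l^i, l+1) = F_{k-i}(x, 2) = F_{k-i}(x)$ in the abbreviated notation. Thus \eqref{eq:Fnkl1} reads $F_k(x,1) = \sum_{i=0}^k \binom{k}{i} F_{k-i}(x)$, and since $F_k(x,1) = D_k(x)$ by definition, re-indexing $j = k-i$ gives $D_k(x) = \sum_{j=0}^k \binom{k}{j} F_j(x)$. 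The upper limit can be replaced by $\floor{\log_2 x}$ because $F_j(x) = 0$ for $j > \floor{\log_2 x}$ by \eqref{eq:f0} (with the convention that the extra binomial terms vanish); this yields \eqref{eq:Dnk}. The identity \eqref{eq:dnk} follows the same way from \eqref{eq:fnkl1}, using $f_0(n,l)$ supported only at $n=1$ and $f_j(n) = 0$ for $j > \floor{\log_2 n}$.

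The second step is to invert these binomial transforms to get \eqref{eq:Fnk} and \eqref{eq:fnk}. The relation $D_k = \sum_{i=0}^k \binom{k}{i} F_i$ is precisely the statement that $(D_k)$ is the binomial transform of $(F_k)$, whose inverse is the well-known formula $F_k = \sum_{i=0}^k (-1)^{k-i}\binom{k}{i} D_i$. If one wants this self-contained, substitute the expression for $D_i$ into the right-hand side of \eqref{eq:Fnk}, swap the order of summation, and use the standard alternating-binomial identity $\sum_{i=j}^{k} (-1)^{k-i}\binom{k}{i}\binom{i}{j} = [j=k]$, which collapses the double sum to $F_k(x)$. The same inversion, applied pointwise in $n$ rather than in $x$, gives \eqref{eq:fnk} from \eqref{eq:dnk}.

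A minor point worth addressing carefully is the $k=0$ boundary case claimed in the corollary: for $k=0$ all four sums reduce to the single term $i=0$, giving $D_0(x) = F_0(x)$, $d_0(n) = f_0(n)$, etc., which are trivially consistent with the conventions $F_0(x,l)=1$ and $f_0(n,l)=[n=1]$ from \eqref{eq:fnkl0}--\eqref{eq:Fnkl0}, so nothing special is needed. One should also note that although Theorem~\ref{th:combi} as stated requires $k,l \geq 1$, the formulas \eqref{eq:fnkl1} and \eqref{eq:Fnkl1} with $k=0$ hold trivially, so specializing to $l=1$ is legitimate for all $k \geq 0$.

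I do not anticipate a genuine obstacle here; the result is essentially a dictionary translation of Theorem~\ref{th:combi} plus a textbook inversion. The only place demanding a little care is bookkeeping the upper summation limits: in \eqref{eq:Dnk} and \eqref{eq:dnk} the sum runs to $\floor{\log_2 x}$ (resp.\ $\floor{\log_2 n}$) rather than to $k$, which is harmless when $k \le \floor{\log_2 x}$ but when $k > \floor{\log_2 x}$ one is implicitly using that the omitted terms $\binom{k}{i}F_i(x)$ with $i > \floor{\log_2 x}$ are zero — so the two conventions for the upper limit agree. Making that remark explicit is the one thing I would be sure to include.
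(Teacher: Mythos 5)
Your proposal is correct and follows essentially the same route as the paper: specialize \eqref{eq:Fnkl1} and \eqref{eq:fnkl1} to $l=1$ (using the boundary condition \eqref{eq:f0} to adjust the summation limit) and then invert the binomial transform to obtain \eqref{eq:Fnk} and \eqref{eq:fnk}. Your extra remarks on the $k=0$ case and on the bookkeeping of the upper limits are sound and merely make explicit what the paper leaves implicit.
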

	\begin{proof}
		The relations \eqref{eq:Dnk} and \eqref{eq:dnk} follow directly from \eqref{eq:Fnkl1} and \eqref{eq:fnkl1} with $l=1$ and the boundary conditions \eqref{eq:f0}. 
		
		By the definition of the binomial transform, we can say that for fixed $x \geq 1$ (resp. $n \geq 1$), $D_k(x)$ (resp. $f_k(n)$) is the binomial transform (with the respect to $k$) of $F_k(x)$ (resp. $f_k(n)$). Therefore the relations \eqref{eq:Fnk} and \eqref{eq:fnk} can be deduced from the inversion of the binomial transform in general. 
	\end{proof} 
	\textbf{Remark 1:} The relationship between $f_k(n)$ and $d_k(n)$ covered by corollary \ref{th:combi} seems to be well known, for example equation \eqref{eq:fnk} is mentioned in \cite[Chapter 17.2]{Fri10}. Equation \eqref{eq:dnk} appears in a footnote of \cite{Skl52}.
	
	\textbf{Remark 2:} In this paper, we restrict ourselves to the case of factorizations where all integers greater or equal to a given $l$ are allowed, since we are mainly interested in the case $l=1$ and $l=2$. The above formulas in theorem \ref{th:combi} and corollary \ref{co:combi1} could be generalized to the case of factorizations consisting of arbitrary subsets of the positive integers (with at least two elements), as treated in \cite{Hwa11} or \cite{War93}. The main idea in the proof of theorem  \ref{th:combi} is to separate the smallest factor in the factorizations, which is also possible in the general (constrained) case. Similar results as in corollary \ref{th:combi} hold whenever $1$ is the (smallest) element of the set of admissible factors.
	
	Another remarkable relation between $f_k(n)$ and $d_k(n)$ is treated in the next corollary.
	\begin{myCorollary} 
	\label{co:combi2}
		For $n \geq 1$ and $\abs{u} > 1$ we have
		\begin{align}
		\sum_{k=0}^{\infty} u^{-k} d_k(n) &= 
		\frac{u}{u-1} \sum_{k=0}^{\floor{\log_2 n}} (u-1)^{-k} f_k(n) 
		\label{eq:dfnk}  
		\end{align}
	\end{myCorollary}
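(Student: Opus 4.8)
The plan is to reduce \eqref{eq:dfnk} to the elementary power-series identity $\sum_{k\geq 0}\binom{k}{i}z^k = z^i(1-z)^{-(i+1)}$ by substituting the expansion \eqref{eq:dnk} of $d_k(n)$ into the left-hand side and interchanging the order of summation. Concretely, I would write $z=u^{-1}$ (so that $|z|<1$ by hypothesis) and compute
\begin{equation*}
	\sum_{k=0}^{\infty} u^{-k} d_k(n) = \sum_{k=0}^{\infty} z^k \sum_{i=0}^{\lnl{2}{n}} \binom{k}{i} f_i(n) = \sum_{i=0}^{\lnl{2}{n}} f_i(n) \sum_{k=0}^{\infty} \binom{k}{i} z^k \text{.}
\end{equation*}

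Before performing the interchange I would justify absolute convergence: by \eqref{eq:dnk}, for fixed $n$ the quantity $d_k(n) = \sum_{i=0}^{\lnl{2}{n}}\binom{k}{i} f_i(n)$ is a polynomial in $k$ of degree at most $\lnl{2}{n}$ with nonnegative coefficients $f_i(n)$, hence $0\leq d_k(n)$ is bounded by a constant depending only on $n$ times $k^{\lnl{2}{n}}$, and therefore $\sum_{k} |z|^k d_k(n)<\infty$ for $|z|<1$. This makes the rearrangement of the double series into the finitely many inner series over $i$ legitimate. The identity $\sum_{k\geq 0}\binom{k}{i}z^k=z^i(1-z)^{-(i+1)}$ itself I would simply quote (it is the $i$-th derivative of the geometric series, or the negative binomial expansion); with $z=u^{-1}$ it gives
\begin{equation*}
	\sum_{k=0}^{\infty}\binom{k}{i}u^{-k}=\frac{u^{-i}}{\left(1-u^{-1}\right)^{i+1}}=\frac{u^{-i}\,u^{i+1}}{(u-1)^{i+1}}=\frac{u}{(u-1)^{i+1}}\text{,}
\end{equation*}
where I use that $|u|>1$ excludes $u=1$, so the right-hand side is well defined.

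Substituting this back,
\begin{equation*}
	\sum_{k=0}^{\infty} u^{-k} d_k(n) = \sum_{i=0}^{\lnl{2}{n}} f_i(n)\,\frac{u}{(u-1)^{i+1}} = \frac{u}{u-1}\sum_{i=0}^{\lnl{2}{n}} (u-1)^{-i} f_i(n) \text{,}
\end{equation*}
and renaming the summation index $i$ to $k$ yields \eqref{eq:dfnk}. The only point that genuinely needs care is the interchange of the infinite sum over $k$ with the (finite) sum over $i$; once the binomial generating-function identity is in hand, everything after it is a one-line rearrangement of the powers of $u$. If one preferred to avoid the convergence bookkeeping entirely, the identical computation goes through verbatim in the ring of formal power series in $u^{-1}$, and since both sides of \eqref{eq:dfnk} are functions analytic on $|u|>1$ this already establishes the stated equality there.
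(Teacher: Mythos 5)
Your proposal is correct and follows essentially the same route as the paper: both substitute the binomial-transform expansion \eqref{eq:dnk} of $d_k(n)$, interchange the order of summation, and apply the generating function $\sum_{k\geq i}\binom{k}{i}y^k=y^i(1-y)^{-(i+1)}$ before substituting $y=u^{-1}$. The only cosmetic difference is that the paper performs the interchange on finite partial sums via Lemma~\ref{lm:bin} and then lets $N\to\infty$, whereas you interchange the infinite double series directly after checking absolute convergence; the two justifications are equivalent.
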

	\begin{proof}
		
		Recall that for given $k \geq 1$ the generating function of the binomial coefficients is given by
		\begin{align}
			\sum_{i=k}^{\infty} \binom{i}{k} y^i 
			= \frac{y^k}{(1-y)^{k+1}} \text{,}
			\label{eq:gfbin}
		\end{align}
		with absolute convergence for $\abs{y} < 1$. 
		
		For $n \geq 1$, $\abs{y} < 1$ and large $N$, we have by \eqref{eq:dnk} and lemma \ref{lm:bin} (with $r=0$) below 
		\begin{align}
			\sum_{k=0}^{N} y^{k} d_k(n)
			 &= \sum_{k=0}^{N} y^{k} \sum_{i=0}^{k} \binom{k}{i} f_i(n) 
			 \notag         \\
			 &= \sum_{k=0}^{N} f_k(n) \sum_{i=k}^{N} \binom{i}{k} y^{i}
			 \text{.}
			 \label{eq:fins}
		\end{align}
		Using \eqref{eq:gfbin}, by absolute convergence we can let $N \rightarrow \infty$ in \eqref{eq:fins} to get
		\begin{align*}
			\sum_{k=0}^{\infty} y^{k} d_k(n)
			&= \sum_{k=0}^{\infty} \frac{y^k}{(1-y)^{k+1}} f_k(n) 
		\end{align*}
		Finally, we set $u:=\tfrac{1}{y}$ and the claim follows by factoring out 
		$\tfrac{1}{1-y}=\tfrac{u}{u-1}$ and taking into account $\tfrac{y}{1-y} = (u-1)^{-1}$.
	\end{proof}
	Note that in \eqref{eq:dfnk} $d_k(n)$ and $f_k(n)$ can be replaced by $D_k(x)$ and $F_k(x)$, for $x \geq 1$, by the definition of $F_k(x,l)$ as the cumulated sum over $f_k(n,l)$, $n \leq x$.
	
	Special cases of \eqref{eq:dfnk} include the equation 
	$2 f(n)=\sum_{k=0}^{\infty} 2^{-k} d_k(n)$  
	for $u=2$. This formula was proved by Sen in \cite{Sen41} for the special case of square free $n$ and then later used by Sklar in \cite{Skl52} to derive an asymptotic for $f(n)$ in this case.

\section{Factorizations with a large number of factors}
\label{sec:polynomial}
	Throughout this section we use the notation $t=t(x,l)=\floor{\log_l{x}}$ for given $x$ and $l$. In this section \eqref{eq:Fnkl1} will be applied to show that $F_{t-j}(x,l)$ is a polynomial in $t$; we give explicit formulas for the degree $\tau$ and the coefficients of the polynomial. 

	We begin by preparing two lemmas. The first lemma exploits the fact that $F_k(n,l)$ vanishes for large $k$ and gives an explicit expression for the number of summands in \eqref{eq:Fnkl1}. 
	
	\begin{myLemma}
	\label{lm:ftau}
		For $x\geq 1$, $l \geq 1$, $k\geq 1$ we have 
		\begin{align}
			F_k(x,l) &=\sum_{i=0}^{\tau(x,k,l)} 
			\binom{k}{i}F_{i} \left( \frac{x}{l^{k-i}},l+1 \right) 
			\text{,} \quad \text{with} 
			\label{eq:tauf} \\
			\tau(x,k,l) & =\min \left( k, 
			\left \lceil{ \frac{\log x - k \log l}{\log(l+1) - \log l}}
			\right \rceil  \right)
			\text{.}
			\label{eq:tau}
		\end{align} 	
	\end{myLemma}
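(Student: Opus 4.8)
The plan is to deduce \eqref{eq:tauf} from the already-established identity \eqref{eq:Fnkl1} by re-indexing the sum and then discarding the summands that are identically zero. First I would substitute $i \mapsto k-i$ in \eqref{eq:Fnkl1} and use the symmetry $\binom{k}{i} = \binom{k}{k-i}$ to rewrite it as
\[
	F_k(x,l) = \sum_{i=0}^{k} \binom{k}{i}\, F_{i}\!\left( \frac{x}{l^{k-i}},\, l+1 \right) ,
\]
so that \eqref{eq:tauf} reduces to the claim that the summand with index $i$ vanishes for every integer $i$ with $\tau(x,k,l) < i \le k$.

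The second step is to characterise when such a summand vanishes. Since $l \ge 1$, the second argument $l+1$ is at least $2$, so for $i \ge 2$ the boundary relation \eqref{eq:f0} applies; the remaining cases $i=1$ and $i=0$ are covered by \eqref{eq:f1} and by the value convention for $F_0$ (namely $F_0(y,\cdot)=1$ for $y\ge 1$ and $F_0(y,\cdot)=0$ for $y<1$), respectively. Putting these together: if $(l+1)^i l^{k-i} > x$, equivalently $(l+1)^i > x/l^{k-i}$, then $F_i(x/l^{k-i}, l+1)=0$. As $\log(l+1)-\log l>0$, taking logarithms turns the hypothesis $(l+1)^i l^{k-i}>x$ into the linear inequality $i > \alpha$ with $\alpha := \bigl(\log x - k\log l\bigr)/\bigl(\log(l+1)-\log l\bigr)$, which is precisely the fraction appearing in \eqref{eq:tau}.

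Finally I would compare the index range $\tau(x,k,l) < i \le k$ with the condition $i>\alpha$. If $\lceil\alpha\rceil \ge k$ then $\tau(x,k,l)=k$ and \eqref{eq:tauf} is just the re-indexed \eqref{eq:Fnkl1}, with nothing to prove. Otherwise $\tau(x,k,l)=\lceil\alpha\rceil$, and every integer $i$ in the range $\tau(x,k,l) < i \le k$ satisfies $i \ge \lceil\alpha\rceil+1 > \alpha$; hence each such summand is zero by the previous step, and truncating the sum at $\tau(x,k,l)$ leaves it unchanged. I would also note the degenerate case $\tau(x,k,l)<0$ (i.e. $\alpha<0$), which can only occur for $l\ge 2$ — for $l=1$ one has $\alpha=\log_2 x\ge 0$ — and there $x<l^k$, so $F_k(x,l)=0$ by \eqref{eq:f0} (or \eqref{eq:f1} if $k=1$), matching the empty sum.

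There is no genuine obstacle here; the only places needing a word of care are the small indices $i\in\{0,1\}$ that lie outside the hypotheses of \eqref{eq:f0}, and the fact that \eqref{eq:f0} is phrased for an integer first argument — harmless, since $F_i(y,m)=F_i(\floor{y},m)$ and $\floor{\log_m y}=\floor{\log_m\floor{y}}$. It is also worth remarking that the bound is not tight: one could replace $\lceil\alpha\rceil$ by $\floor{\alpha}$, because the single possibly superfluous term at $i=\floor{\alpha}+1$ still has $i>\alpha$ and is therefore zero. The looser form with $\lceil\cdot\rceil$ is presumably retained for convenience in the estimates that follow.
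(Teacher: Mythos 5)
Your proposal is correct and follows essentially the same route as the paper: re-index the sum in \eqref{eq:Fnkl1} via $\binom{k}{i}=\binom{k}{k-i}$, observe that the summand vanishes precisely when $(l+1)^i l^{k-i}>x$, i.e.\ when $i$ exceeds the fraction in \eqref{eq:tau}, and truncate. Your extra care with the indices $i\in\{0,1\}$, the degenerate case $\tau<0$, and the remark that $\lceil\cdot\rceil$ could be sharpened to $\lfloor\cdot\rfloor$ go slightly beyond what the paper writes down, but the argument is the same.
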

	\begin{proof}
		First note that in \eqref{eq:tauf} we have reversed the order of summation in comparison to \eqref{eq:Fnkl1} and used the fact that $\binom{k}{i} =\binom{k}{k-i}$. From \eqref{eq:f0}, the term $F_{i}(\frac{x}{l^{k-i}},l+1)$ vanishes if either 
		\begin{align*}
			l+1 > \sqrt[i]{n/l^{k-i}} \text{}  \qquad \text{or} \qquad
			  i > \log_{l+1} (n/l^{k-i})
			  \text{.}
		\end{align*}
		After some algebra, this leads in both cases to 
		\begin{align*}
			i &> \frac{\log n - k \log(l+1)}{\log l - \log(l+1)}
			\text{.}
		\end{align*}
		This completes the proof.
	\end{proof} 

	The next lemma was already used in the proof of corollary \ref{co:combi2}.
	\begin{myLemma}
		\label{lm:bin}
		For real $\nu_i$, $\xi_{i,j}$ and $\gamma_j$, we have for $k \geq 1$ and 
		$0 \leq r \leq k$ 
		\begin{align*}
		\sum_{i=r}^{k} \nu_i \sum_{j=0}^{i-r} \xi_{i,j} \gamma_j
		&= \sum_{j=0}^{k-r} \gamma_j  
		\sum_{i=j+r}^{k} \nu_{i} \xi_{i,j}
		\text{.} 
		\end{align*}
	\end{myLemma}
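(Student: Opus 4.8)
The statement is a purely formal re-indexing identity for a double sum, so the plan is to prove it by the standard interchange-of-summation-order technique. Both sides are finite sums, so no convergence issues arise and the whole argument is bookkeeping with index ranges.

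First I would look at the region of summation on the left-hand side: the pair $(i,j)$ runs over all integers with $r \le i \le k$ and $0 \le j \le i-r$. The key observation is that this region can be described symmetrically: it is exactly the set of pairs $(i,j)$ with $0 \le j \le k-r$ and $j+r \le i \le k$. Indeed, $j \le i-r$ is equivalent to $i \ge j+r$, and combined with $i \le k$ this forces $j+r \le k$, i.e. $j \le k-r$; conversely any $(i,j)$ in the second description satisfies $r \le j+r \le i \le k$ and $0 \le j \le i-r$. So the two descriptions of the index set coincide.

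Given that, the proof is immediate: write the left-hand side as a single sum $\sum_{(i,j)} \nu_i \xi_{i,j}\gamma_j$ over the common index set, then group the terms by the value of $j$ first instead of $i$ first. Since $\gamma_j$ and the range $j+r \le i \le k$ depend only on $j$, factoring $\gamma_j$ out of the inner sum over $i$ yields $\sum_{j=0}^{k-r}\gamma_j \sum_{i=j+r}^{k}\nu_i\xi_{i,j}$, which is the right-hand side. I would also briefly check the degenerate cases (e.g. $r=k$, where both sides reduce to $\sum_{i=k}^{k}\nu_i\xi_{i,0}\gamma_0$, and the cases where an inner sum is empty), noting that with the convention that empty sums are zero everything remains consistent.

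There is no real obstacle here; the only thing to be careful about is getting the inequalities defining the transposed index set exactly right, in particular that the upper limit of the outer $j$-sum is $k-r$ and the lower limit of the inner $i$-sum is $j+r$ — a classic spot for off-by-one slips. Once the index set is correctly identified, the identity is just Fubini for finite sums.
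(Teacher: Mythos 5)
Your proposal is correct and is essentially the same argument as the paper's: the paper simply writes out the double sum term by term and regroups by $j$, which is exactly your interchange-of-summation-order over the common index set $\{(i,j): 0\le j\le k-r,\ j+r\le i\le k\}$. Your version just makes the index-set bookkeeping explicit where the paper displays the expanded terms.
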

	
	\begin{proof}
		We write out the left hand side of the equation and rearrange terms to get
		\begin{align*}
		\sum_{i=r}^{k} \nu_i \sum_{j=0}^{i-r} \xi_{i,j} \gamma_j
		&= \nu_r     (\xi_{r,0}   \gamma_0 ) 
		+ \nu_{r+1} (\xi_{r+1,0} \gamma_0 + \xi_{r+1,1} \gamma_1 ) 
		+ \cdots 
		+ \nu_k (\xi_{k,0} \gamma_0 + \dots + \xi_{k,k-r} \gamma_{k-r}) \\
		&= \gamma_0     ( \nu_r     \xi_{r,  0} + \cdots + \nu_k \xi_{k,0} ) + 
		\gamma_1     ( \nu_{r+1} \xi_{r+1,1} + \cdots + \nu_k \xi_{k,1} ) + \cdots +
		\gamma_{k-r} ( \nu_k \xi_{k,k-r} ) 
		\text{}
		\end{align*}
		and the claim follows.
	\end{proof}	
	The next theorem is a straightforward implication of \eqref{eq:Fnkl1} and the fact that for positive integers $n \geq k$, we have $\binom{n}{k} =  \sum_{i=0}^{k} n^i \sto{k}{i}  \frac{1}{k!}$, where $\sto{k}{i}$ denotes the Stirling numbers of the first kind.
	\begin{myTheorem}
	\label{th:fpoly}
		For $x \geq 1$, $l \geq 1$, $t=\floor{\log_l x}$ and $j \leq t-1$, we have $F_{t-j}(x,l)=P_{\tau}(t-j)$, where $P_{\tau}$ is a polynomial of degree $\tau$ in $t-j$ given by 	
		\begin{align*}
		P_\tau(t-j)&=\sum_{m=0}^{\tau} v_m (t-j)^m 
		\text{,} \quad \text{with} \\
		\tau(x,j,l) & =\min \left( t-j, 
		\left \lceil{ \frac{\fract{log_l x} + j \log l}{\log(l+1) - \log l}}
		\right \rceil  \right) \\
		v_m &= \sum_{i=m}^{\tau} \kappa_i \sto{i}{m} \frac{1}{i!}    \\
		\kappa_{i} &= F_i \left( x_i, l+1 \right) \\
		x_i &= l^{\fract{log_l x}} l^{i+j}\text{,} \quad i=0,\dots,\tau
		\text{.}
		\end{align*}
	\end{myTheorem}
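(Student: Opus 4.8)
\textbf{Proof plan for Theorem~\ref{th:fpoly}.}
The plan is to start from Lemma~\ref{lm:ftau} applied with $k=t-j$, which already rewrites $F_{t-j}(x,l)$ as a finite sum $\sum_{i=0}^{\tau} \binom{t-j}{i} F_i\!\left(\frac{x}{l^{t-j-i}},l+1\right)$, and then to do two independent pieces of bookkeeping: simplify the arguments of the inner $F_i$, and expand the binomial coefficients into powers of $t-j$. First I would compute $\frac{x}{l^{t-j-i}}$. Writing $x = l^{t}\,l^{\fract{\log_l x}}$ (using $t=\floor{\log_l x}$), we get $\frac{x}{l^{t-j-i}} = l^{\fract{\log_l x}}\,l^{\,i+j} = x_i$, which identifies the inner term as $\kappa_i = F_i(x_i,l+1)$ exactly as claimed. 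I would also check that the $\tau$ in \eqref{eq:tau} with $k=t-j$ matches the $\tau(x,j,l)$ in the theorem: substituting $k=t-j$ into \eqref{eq:tau} and using $\log x = (t+\fract{\log_l x})\log l$ turns the ceiling argument into $\frac{\fract{\log_l x}\log l + j\log l}{\log(l+1)-\log l}$; here one must be slightly careful that the paper's displayed formula writes $\fract{log_l x}$ (without the $\log l$ factor) inside the ceiling, so I would either note this as a harmless typo or re-derive the bound cleanly from \eqref{eq:f0} as in the proof of Lemma~\ref{lm:ftau}.

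Next I would handle the polynomial structure. For $0 \le i \le \tau \le t-j$ the quantity $t-j$ is a positive integer $\ge i$, so the stated identity $\binom{t-j}{i} = \sum_{m=0}^{i} (t-j)^m \sto{i}{m}\frac{1}{i!}$ applies (this is just the expansion of the falling factorial $(t-j)(t-j-1)\cdots(t-j-i+1)$ via signed Stirling numbers of the first kind, which is valid for the real variable $t-j$ once we know it is an integer exceeding $i-1$). Substituting this into the sum from Lemma~\ref{lm:ftau} gives
\begin{align*}
F_{t-j}(x,l) = \sum_{i=0}^{\tau} \kappa_i \sum_{m=0}^{i} (t-j)^m \sto{i}{m}\frac{1}{i!}.
\end{align*}
Then I would interchange the order of summation to collect the coefficient of each power $(t-j)^m$: this is precisely Lemma~\ref{lm:bin} with $r=0$, $k=\tau$, $\nu_i=\kappa_i/i!$, $\xi_{i,m}=\sto{i}{m}$ and $\gamma_m=(t-j)^m$ (formally $\gamma_m$ depends on $m$ only, as required), yielding $F_{t-j}(x,l) = \sum_{m=0}^{\tau}(t-j)^m \sum_{i=m}^{\tau}\kappa_i \sto{i}{m}\frac{1}{i!} = \sum_{m=0}^{\tau} v_m (t-j)^m$, which is the assertion.

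Finally I would argue that the degree is exactly $\tau$ and not smaller, i.e.\ that the leading coefficient $v_\tau = \kappa_\tau \sto{\tau}{\tau}\frac{1}{\tau!} = \frac{1}{\tau!}F_\tau(x_\tau,l+1)$ is nonzero. Since $\sto{\tau}{\tau}=1$, this reduces to $F_\tau(x_\tau,l+1) \neq 0$, which follows from the definition of $\tau$ as (essentially) the largest index for which the corresponding term in \eqref{eq:tauf} does not vanish by \eqref{eq:f0}; one should check the boundary behaviour of the ceiling function carefully here, since $\tau$ is defined via a ceiling and the nonvanishing condition is a strict inequality, and also dispose of the degenerate case $\tau = 0$ (where $F_{t-j}(x,l)=\kappa_0 = F_0(x_0,l+1)=1$ is the constant polynomial). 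I expect this last point — reconciling the $\lceil\cdot\rceil$ in the formula for $\tau$ with the strict inequality governing when $F_i(x/l^{t-j-i},l+1)$ vanishes, and thereby confirming the leading term is genuinely nonzero — to be the main obstacle; everything else is the routine re-derivation of $x_i$, the Stirling expansion, and the index swap via Lemma~\ref{lm:bin}.
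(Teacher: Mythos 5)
Your proposal follows essentially the same route as the paper: apply Lemma~\ref{lm:ftau} with $k=t-j$, simplify $x/l^{t-j-i}$ to $x_i$ using $x=l^{t}l^{\fract{\log_l x}}$, expand $\binom{t-j}{i}$ via Stirling numbers of the first kind, and swap the summation order with Lemma~\ref{lm:bin} at $r=0$. You are in fact somewhat more careful than the paper, which silently commits the very $\log l$-factor slip in the ceiling expression that you flag, and which never verifies that the leading coefficient $v_\tau$ is nonzero (so that the degree is exactly $\tau$) — a point you rightly identify as the only step needing genuine attention.
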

	\begin{proof}
		It follows from \eqref{lm:ftau} that 
		\begin{align*}
			F_{t-j}(x,l) &=\sum_{i=0}^{\tau(x,j,l)} 
			\binom{t-j}{i} \Fkxl{i}{\frac{x}{l^{t-j-i}}}{l+1} 
			\text{,}  \quad \text{with} \\ 
			\tau(x,j,l)  &=\min \left( t-j, 
			\left \lceil{ \frac{\log x - (t-j) \log l}{\log(l+1) - \log l}}
			\right \rceil  \right)
			 \text{.}
		\end{align*}
		The expression for $\tau(x,j,l)$ in the theorem follows from 
		\begin{align*}
			\log x - (t-j) \log l = \log x - (\floor{\log_l x}-j) \log l  
			= \fract{\log x} +  j \log l 
			\text{.}
		\end{align*}
		The first argument of $F_i$ in the sum becomes
		\begin{align*}
		\frac{x}{l^{t-j-i}} &= \frac{x}{l^{\floor{log_l x}-j-i}}  
		= l^{log_l x} l^{- \floor{log_l x}} l^{i+j} 
		= l^{\fract{log_l x}} l^{i+j} 
		= x_i
		\text{,}
		\end{align*}
		so that we get $F_{t-j}(x,l) =\sum_{i=0}^{\tau(x,j,l)} 
		\binom{t-j}{i} \kappa_i$, with $\tau(x,j,l)$ and 
		$\kappa_i = F_i(x_i,l+1)$ as required. Next, we use the fact that the binomial coefficients in this expression can be written as a polynomial
		\begin{align*}
		\binom{t-j}{i} &=  \sum_{m=0}^{i} (t-j)^m \sto{i}{m}  \frac{1}{i!} \text{,}
		\end{align*}
		so that we get 
		\begin{align*}
		F_{t-j}(x,l) &=\sum_{i=0}^{\tau} \kappa_i \sum_{m=0}^{i} (t-j)^m \sto{i}{m}  \frac{1}{i!}  \\
		&= \sum_{m=0}^{\tau} (t-j)^m 
		   \sum_{i=m}^{\tau} \kappa_i  \sto{i}{m}  \frac{1}{i!} \text{,}
		\end{align*}
		where we have used lemma \ref{lm:bin} (with $r=0$) in the last equation.
		This completes the proof.
	\end{proof} 
	The calculation of the coefficients $v_m$ of the polynomial in theorem \ref{th:fpoly} requires the calculation of Stirling numbers of the first kind and of values of $F_k(n,m)$ for parameters $m \geq l$, which is, in principle, possible via theorem \ref{th:combi}. An easier method to derive the coefficients is given in the next corollary.
	\begin{myCorollary}
	\label{co:fpoly}
		For given $x,l$ and j as in theorem \ref{th:fpoly}, we set $\lambda_i := F_i(x_i,l)$, with $x_i$ ($i=0,\dots,\tau$) and $\tau$ defined as in theorem \ref{th:fpoly}. 
		Let $\lambda$ be the vector of the $\lambda_i$'s and $B$ be the matrix of $b_{i,m} = (i+j)^{m}$ for $i,m=0,\dots,\tau$. Then
		\begin{align}
			F_{t-j}(x,l)=\sum_{m=0}^{\tau} w_{m} t^m \text{,}	
		\end{align} 
		where the coefficients $w_m$ are the elements of the vector $w=\lambda B^{-1}$ and $t=\lnl{l}{x}$.	
	\end{myCorollary}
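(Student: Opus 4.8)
The idea is that Theorem~\ref{th:fpoly} already exhibits $F_{t-j}(x,l)$ as a polynomial $Q(t)=\sum_{m=0}^{\tau}w_m t^m$ of degree at most $\tau$ in $t$, so to pin down the coefficient vector $w$ it is enough to know the value of $Q$ at $\tau+1$ distinct points. The plan is to show that the points $t=j,j+1,\dots,j+\tau$ reproduce exactly the numbers $\lambda_0,\lambda_1,\dots,\lambda_\tau$, and then to recover $w$ by inverting the arising Vandermonde system. Concretely, from (the proof of) Theorem~\ref{th:fpoly} we have $F_{t-j}(x,l)=\sum_{i=0}^{\tau}\binom{t-j}{i}\kappa_i$ with $\kappa_i=F_i(x_i,l+1)$; setting $Q(t):=\sum_{i=0}^{\tau}\binom{t-j}{i}\kappa_i$ and noting that each $\binom{t-j}{i}$ is a polynomial in $t$ of degree $i\le\tau$, this is a polynomial identity $Q(t)=\sum_{m=0}^{\tau}w_m t^m$, and $F_{t-j}(x,l)=Q(t)$ for our value of $t$.

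Next I would evaluate $Q$ at $t=i+j$ for an integer $i$ with $0\le i\le\tau$. Since $\binom{i}{k}=0$ for $k>i$, this gives $Q(i+j)=\sum_{k=0}^{\tau}\binom{i}{k}\kappa_k=\sum_{k=0}^{i}\binom{i}{k}\kappa_k$. The crucial point is to recognise this last sum as $\lambda_i$. That follows from the self-similar structure of the arguments $x_i=l^{\fract{\log_l x}}\,l^{i+j}$: one has $x_i/l^{r}=x_{i-r}$ for $0\le r\le i$, hence $F_{i-r}(x_i/l^{r},l+1)=\kappa_{i-r}$, and therefore, by the combinatoric identity~\eqref{eq:Fnkl1} applied with $i$ in the role of $k$ and second argument $l$,
\[
\lambda_i=F_i(x_i,l)=\sum_{r=0}^{i}\binom{i}{r}F_{i-r}(x_i/l^{r},l+1)=\sum_{r=0}^{i}\binom{i}{r}\kappa_{i-r}=\sum_{k=0}^{i}\binom{i}{k}\kappa_k=Q(i+j).
\]
Thus the coefficients $w_m$ satisfy $\sum_{m=0}^{\tau}(i+j)^m w_m=\lambda_i$ for $i=0,\dots,\tau$; in the matrix notation of the statement this reads $\lambda=Bw$.

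To conclude, $B=(b_{i,m})$ with $b_{i,m}=(i+j)^m$ is a Vandermonde matrix on the pairwise distinct nodes $j,j+1,\dots,j+\tau$, hence invertible, so $\lambda=Bw$ has the unique solution $w=\lambda B^{-1}$, which is the asserted description of the coefficients. The only step that is not routine bookkeeping is the middle one, the identification $Q(i+j)=\lambda_i$: its entire content is the observation that unfolding $F_i(x_i,l)$ by one level (from $l$ to $l+1$) via~\eqref{eq:Fnkl1} reassembles precisely the truncated polynomial $\sum_{k\le i}\binom{i}{k}\kappa_k=Q(i+j)$; the distinctness of the interpolation nodes and the non-singularity of Vandermonde matrices take care of the rest.
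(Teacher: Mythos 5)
Your proof is correct and follows essentially the same route as the paper: both identify $F_{t-j}(x,l)$ with a degree-$\tau$ polynomial, evaluate it at the $\tau+1$ distinct nodes $i+j$ to obtain the values $\lambda_i=F_i(x_i,l)$, and invert the resulting Vandermonde system $\lambda=wB$. The only difference is in how the key identity $\lambda_i=Q(i+j)$ is justified --- the paper applies Theorem~\ref{th:fpoly} to $y=x_i$ and invokes the fact that the coefficients depend only on $\fract{\log_l y}$, whereas you verify it directly from \eqref{eq:Fnkl1} together with the self-similarity $x_i/l^{r}=x_{i-r}$, which is an equally valid (and arguably more explicit) justification of that step.
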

	\begin{proof}
		For given $j$ and $l$, we know from theorem \ref{th:fpoly} that $\Fkxl{\lnl{l}{y}-j}{y}{l}=\sum_{m=0}^{\tau} w_{m} \lnl{l}{y}^m$ for some coefficients $w_m$ for all $y \geq 1$, where the $w_m$ depend only on the value of $\fract{\log_l y}$. Therefore, for a given $x$, we can choose $x_i = l^{\fract{log_l x} +i+j}$, for $i=0,\cdots,\tau$, with 
		\begin{align*}
			\Fkxl{i}{x_i}{l} = \sum_{m=0}^{\tau} w_{m} (i+j)^m
		\end{align*}
		since $\lnl{l}{x_i} = i+j$. Defining the vectors $\lambda, w$ and the matrix $B$ as in the corollary, the above equation reeds 
		$\lambda = wB$. Since $B$ is invertible, we finally get $w=\lambda B^{-1}$. 	\\	
		This completes the proof.
	\end{proof}

	\textbf{Example 1:} We calculate $F_{329}(10^{100})$ based on the above formulas. We have $k=329$, $t=\lnl{2}{10^{100}}=332$ and therefore $j=3$. Lemma \ref{lm:ftau} gives $\tau=5$. 
	
	Next we calculate, according to theorem \ref{th:fpoly}, $\kappa_i=\Fkxl{i}{x_i}{3}=(1,16,36,32,15,1)$, with $\floor{x_i}=(9,18,36,73,146,292)$ for $i=0,\dots,5$. With these values, we proceed to calculate the coefficients $v_i$ of the polynomial in $k$. 
	
	With corollary \ref{co:fpoly}, we calculate $\lambda_i = F_i(x_i)=(1,17,69,189,424,837)$ for $x_i$ as above and proceed to calculate the coefficients $w_i$ of the polynomial in $t$. 
	
	Finally we get the following two polynomials 
	\begin{align*}
		F_{329}(10^{100}) &= \tfrac{1}{120} k^5 + \tfrac{13}{24} k^4 +
							 \tfrac{15}{8} k^3  + \tfrac{203}{24} k^2 + \tfrac{307}{60} k + 1  \\
					  &= \tfrac{1}{120} t^5 + \tfrac{5}{12} t^4 -
						  \tfrac{31}{8} t^3  + \tfrac{223}{12} t^2 - \tfrac{252}{15} t + 53 \\
					  &=38{,}535{,}596{,}289  \text{.}
	\end{align*}
	
	\textbf{Example 2:} By calculating the polynomials at $n=2^m$ and $n=2^{m+1}-1$ for $m=0,1,\dots$ and $l=2$ with corollary \ref{co:fpoly}, we can get explicit lower and upper bounds for $F_{t-j}(n)$, using the monotonicity of $F_k(\cdot)$:
	\begin{align*}
		 1    &\leq F_{t-0}(n)  \leq t+1 \\
		2 t-1   &\leq F_{t-1}(n) \leq
		 \tfrac{1}{6} t^3 + \tfrac{3}{2} t^2 -\tfrac{2}{3} t    \\
		 \tfrac{1}{6} t^3 + \tfrac{3}{2} t^2 - \tfrac{14}{3} t + 3   & \leq F_{t-2}(n) \leq
		\tfrac{1}{120} t^5 + \tfrac{1}{24} t^4 + \tfrac{49}{24} t^3 - \tfrac{253}{24} t^2   +\tfrac{449}{20} t -19  \text{.}
	\end{align*}

\section{An average order of $f_k(n)$}
\label{sec:avgorder}
	An average order of $f_k(n)$ is given by Hwang in \cite[Corollary 3]{Hwa00} as
	\begin{equation}
		F_k(x) = x \frac{ (\log x)^{k-1}}{(k-1)!} 
				 \left( 1 + O\left( \frac{k^2}{\log x} \right) \right) \text{,}
		\label{eq:Hwa}
	\end{equation}
	with $1 \leq k = o \left( (\log x)^{2/3} \right)$. Lau \cite[Theorem 2]{Lau01} was able to improve the error to 
	$O\left( x^{1-\alpha_k} (\log x)^{k-1 }\right)$, with 
	$\alpha_k = \epsilon k^{-2/3}$, for some $\epsilon > 0$ and 
	$1 \leq k \leq \left( (\log x)^{3/5} \right)$, 
	but in his formula the main term is only specified up to some unknown constants. Note that in both approaches the parameter $k$ is allowed to grow with $x$. We treat the easier case of  finite values for $k$ here.
	
	Our approach to determine an average order of $f_k(n)$ for fixed $k$ relies on the fact that for fixed $n$, $f_k(n)$ is the (inverse) binomial transform of $d_k(n)$, see \eqref{eq:Fnk} and \eqref{eq:fnk}. For the average order of $d_k(n)$, the following theorem is known, see \cite[Chapter 13]{Ivi85}. 
	
	We use the notation $t=\log x$ for the rest of this section.
	
	\begin{myTheorem}
	\label{th:DDP}
		For $k \geq 1$, $\epsilon > 0$ and $\alpha_{k} = \frac{k-1}{k}$, there exist $a_{k,j}$, $j=0,\dots,k-1$ with 
		\begin{align}
			D_k(x) &= x P_k^D(\log x) + \bigtriangleup_k^D(x) 
			\text{, where} \\
			P_k^D(t) &= \sum_{j=0}^{k-1} a_{k,j} t^j                     \\
			\bigtriangleup_k^D(x) &= O(x^{\alpha_k + \epsilon}) 
			\text{.}
		\end{align}
	\end{myTheorem}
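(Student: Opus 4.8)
The plan is to treat Theorem~\ref{th:DDP} as the classical Piltz divisor problem. By \eqref{eq:dirf} with $l=1$ the Dirichlet series of $d_k$ is $\zeta_1(s)^k=\zeta(s)^k$, so $D_k$ is the summatory function of the $k$-fold divisor function and the claimed principal part $xP_k^D(\log x)$ should be $\operatorname{Res}_{s=1}\!\bigl(\zeta(s)^k x^s/s\bigr)$. I would identify this first: writing $w=s-1$ and inserting $\zeta(s)=w^{-1}+\sum_{n\ge 0}\tfrac{(-1)^n}{n!}\gamma_n w^n$ together with $x^s/s=x\,e^{w\log x}(1+w)^{-1}$, the residue equals $x$ times the coefficient of $w^{k-1}$ in an explicit product of three power series, and this coefficient is a polynomial in $\log x$ of degree $k-1$ with leading coefficient $1/(k-1)!$, its lower coefficients being universal polynomials in the Stieltjes constants $\gamma_0,\dots,\gamma_{k-2}$. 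This is the asserted $P_k^D(t)=\sum_{j=0}^{k-1}a_{k,j}t^j$, so the substance of the theorem is the bound on $\bigtriangleup_k^D(x):=D_k(x)-xP_k^D(\log x)$.

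For that bound I would induct on $k$ using the generalised Dirichlet hyperbola identity. Since $d_1\equiv 1$ by \eqref{eq:f1} and $d_k=d_{k-1}\ast d_1$ by \eqref{eq:fnkl0} with $l=1$, for every $1\le y\le x$ one has
\begin{equation*}
D_k(x)=\sum_{a\le y}d_{k-1}(a)\,\floor{x/a}
+\sum_{b\le x/y}D_{k-1}(x/b)
-\floor{x/y}\,D_{k-1}(y).
\end{equation*}
The base case $k=1$ is immediate ($D_1(x)=\floor{x}=x+O(1)$, matching $\alpha_1=0$), and one may equally start from Dirichlet's $\bigtriangleup_2^D(x)\ll x^{1/2}$. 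For the inductive step I would feed in the hypothesis $D_{k-1}(t)=tP_{k-1}^D(\log t)+O\bigl(t^{(k-2)/(k-1)+\epsilon}\bigr)$ together with $\floor{x/a}=x/a+O(1)$, the bound $d_{k-1}(n)\ll_\epsilon n^\epsilon$, and the elementary partial-sum formulas for $\sum_{a\le y}d_{k-1}(a)/a$ and $\sum_{b\le z}(\log b)^m/b$. The three terms then produce a main part equal to $x$ times a polynomial of degree $k-1$ in $\log x$ (independent of $y$, as $D_k(x)$ is) plus an error $\ll x\,y^{-1/(k-1)+\epsilon}+y^{1+\epsilon}$; here the algebraic identity $x^{(k-2)/(k-1)}(x/y)^{1/(k-1)}=x\,y^{-1/(k-1)}$ shows that the contribution of $\bigtriangleup_{k-1}$ through the middle sum has the same order as through the other two. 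Taking $y=x^{(k-1)/k}$ balances the two error pieces at $x^{(k-1)/k+\epsilon}=x^{\alpha_k+\epsilon}$, which closes the induction.

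The step I expect to be the real obstacle is the bookkeeping that the three main parts do reassemble into one polynomial of degree $k-1$ with leading coefficient $1/(k-1)!$ and, more delicately, with lower coefficients matching the Stieltjes-constant expressions from the residue computation: the $\floor{\cdot}$-versus-$x/a$ discrepancies and the various generalised Euler constants must be carried through the recursion carefully, and $d_k(n)\ll_\epsilon n^\epsilon$ is used throughout to absorb powers of $\log x$ into $x^\epsilon$. I would also mention an analytic alternative that gives the bound $\bigtriangleup_k^D(x)\ll x^{k/(k+2)+\epsilon}$, stronger for $k\ge 2$: a truncated Perron formula for $D_k(x)$, a shift of the contour to $\Re s=-\delta$ with $\delta>0$ small, crossing only the order-$k$ pole at $s=1$ (which contributes $xP_k^D(\log x)$) and the simple pole at $s=0$ (residue $\zeta(0)^k$), and then estimating the remaining integral with the convexity bound $\zeta(\sigma+it)\ll_\epsilon(1+|t|)^{(1-\sigma)/2+\epsilon}$, optimising the truncation height at $T=x^{2/(k+2)}$; there the obstacle is instead making the Perron truncation and the $\zeta$-growth bound on the shifted line precise.
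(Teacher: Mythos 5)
The paper does not actually prove Theorem~\ref{th:DDP}: it is quoted as a known result on the Piltz divisor problem with a pointer to Ivi\'c, Chapter~13, so there is no internal proof to compare against. Your write-up supplies the standard textbook argument for exactly the weak form stated (existence of the degree-$(k-1)$ polynomial and the elementary exponent $\alpha_k=\tfrac{k-1}{k}$), and it is sound: the base case $D_1(x)=\floor{x}$ matches $\alpha_1=0$; the generalised hyperbola identity for $d_k=d_{k-1}\ast 1$ is written correctly; the error bookkeeping $\ll x\,y^{-1/(k-1)+\epsilon}+y^{1+\epsilon}$ is right (your identity $x^{(k-2)/(k-1)}(x/y)^{1/(k-1)}=x\,y^{-1/(k-1)}$ checks out), and $y=x^{(k-1)/k}$ balances it at $x^{\alpha_k+\epsilon}$, closing the induction. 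Two small remarks. First, the theorem only asserts the \emph{existence} of coefficients $a_{k,j}$, so the residue computation identifying them with Stieltjes-constant expressions (which the paper separately attributes to Lavrik) is optional here; you could shortcut the ``real obstacle'' you flag by defining $P_k^D$ as the residue of $\zeta(s)^k x^s/s$ at $s=1$ and only verifying that the hyperbola main terms differ from it by $O(x^{\alpha_k+\epsilon})$, or even more cheaply by noting that the $y$-dependence of the assembled main terms must cancel because $D_k(x)$ and the error bound are both independent of $y$ up to admissible size. Second, in the analytic aside, the exponent $k/(k+2)$ coincides with $(k-1)/k$ at $k=2$ (both equal $\tfrac12$), so it is strictly stronger only for $k\geq 3$; that aside is anyway not needed for the statement as given.
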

	
	Note that for $k=1$ we have $D_1(x) = \floor{x}$ by \eqref{eq:f1} and therefore $\bigtriangleup_1^D(x) \leq 1$, with $a_{1,0}=1$.
	
	Explicit formulas for the coefficients $a_{k,j}$ ($k \geq 2$) of the main term as functions of the Stieltjes constants are given in \cite{Lav76}. The leading terms are given by $a_{k,k-1}=\frac{1}{(k-1)!}$.
	The estimation of the error term is known as the (Dirichlet) divisor problem. The currently best known values for the exponents $\alpha_{k}$ are given in~\cite{Ivi85}. It is conjectured that $\alpha_{k} = \frac{k-1}{2k}$ holds.
	
	With this preparation, we are able to prove the following theorem for the average order of $f_k(n)$.
	\begin{myTheorem}
	\label{th:DDPF}
		For $k \geq 1$ and $\epsilon > 0$ we have $F_k(x) = x P_k^F(\log x) + \bigtriangleup_k^F(x)$ ,where
		\begin{align}
			P_k^F(t) &= \sum_{j=0}^{k-1} b_{k,j} t^j                     \\
			b_{k,j} &= \sum_{i=j+1}^{k} (-1)^{k-i} \binom{k}{i} a_{i,j}  
			\label{eq:bkj}                                               \\
			\bigtriangleup_k^F(x) &= O(x^{\beta_k + \epsilon})  
			\label{eq:trf}         \\
			\beta_k &= \max_{1 \leq j \leq k} {\alpha_j}    
			\label{eq:betakj}           
			\text{.}
		\end{align}
	\end{myTheorem}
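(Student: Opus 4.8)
The plan is to start from the binomial–transform identity~\eqref{eq:Fnk}, which writes $F_k(x) = \sum_{i=0}^{k} (-1)^{k-i}\binom{k}{i} D_i(x)$, and to insert into each summand the asymptotic expansion of $D_i(x)$ furnished by Theorem~\ref{th:DDP}. The summand $i=0$ is not covered by that theorem, but $D_0(x) = \sum_{n \le x} d_0(n) = 1$ for $x \ge 1$ by~\eqref{eq:fnkl0} with $l=1$, so it contributes only the constant $(-1)^k$, which will be absorbed into the error term. For $1 \le i \le k$ I would substitute $D_i(x) = x P_i^D(\log x) + \bigtriangleup_i^D(x)$ and split the result into a main part built from the polynomials $x P_i^D(\log x)$ and an error part built from the $\bigtriangleup_i^D(x)$ together with $(-1)^k$.

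For the main part one has to collect powers of $\log x$. The double sum $x \sum_{i=1}^{k} (-1)^{k-i}\binom{k}{i} \sum_{j=0}^{i-1} a_{i,j} (\log x)^j$ has exactly the shape treated in Lemma~\ref{lm:bin}, with $r=1$, $\nu_i = (-1)^{k-i}\binom{k}{i}$, $\xi_{i,j} = a_{i,j}$ and $\gamma_j = (\log x)^j$. Applying that lemma rewrites it as $x \sum_{j=0}^{k-1} (\log x)^j \sum_{i=j+1}^{k} (-1)^{k-i}\binom{k}{i} a_{i,j} = x P_k^F(\log x)$, with the coefficients $b_{k,j}$ precisely as in~\eqref{eq:bkj}. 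A quick sanity check at $k=1$: \eqref{eq:f1} gives $F_1(x) = \floor{x} - 1$, which matches $b_{1,0} = a_{1,0} = 1$ together with an $O(1)$ error. (One can also note that the top term $j=k-1$ comes only from $i=k$, giving leading coefficient $a_{k,k-1} = \tfrac{1}{(k-1)!}$, consistent with~\eqref{eq:Hwa}.)

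For the error part, Theorem~\ref{th:DDP} gives $\bigtriangleup_i^D(x) = O(x^{\alpha_i + \epsilon})$, and for fixed $k$ the factors $\binom{k}{i}$ are bounded, so the error part is $O\bigl(\sum_{i=1}^{k} x^{\alpha_i + \epsilon}\bigr) + O(1) = O(x^{\beta_k + \epsilon})$ with $\beta_k = \max_{1 \le j \le k}\alpha_j$; the stray constant $(-1)^k$ causes no trouble since $\beta_k \ge \alpha_1 = 0$. This establishes~\eqref{eq:trf} and~\eqref{eq:betakj} and finishes the proof.

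I do not anticipate a real obstacle: the argument is a mechanical substitution followed by the index interchange of Lemma~\ref{lm:bin}. The two points deserving attention are the handling of the $i=0$ term, which lies outside the scope of Theorem~\ref{th:DDP}, and the check that the lower-order transforms $D_i(x)$, $i<k$, do not generate powers of $\log x$ beyond degree $k-1$, which holds because $\deg P_i^D = i-1 \le k-1$. One may additionally remark that $\alpha_j = (j-1)/j$ is increasing, so $\beta_k$ collapses to $\alpha_k = (k-1)/k$; phrasing~\eqref{eq:betakj} as a maximum nonetheless keeps the statement correct for any admissible (in particular, the currently best known) choice of exponents $\alpha_j$ that might fail to be monotone.
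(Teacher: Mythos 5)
Your proposal is correct and follows essentially the same route as the paper: substitute Theorem~\ref{th:DDP} into the binomial-transform identity~\eqref{eq:Fnk}, reorder the double sum with Lemma~\ref{lm:bin} (taking $r=1$) to obtain~\eqref{eq:bkj}, and bound the error by the maximum of the exponents $\alpha_j$. Your explicit treatment of the $i=0$ term and the remark that $\beta_k$ collapses to $\alpha_k$ for the monotone choice $\alpha_j=(j-1)/j$ are minor refinements of the same argument.
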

	\begin{proof}
		First note that for $k=1$ by \eqref{eq:f1} we have 
		$F_1(x) = \left( \floor{x} -1 \right)^+$ and the claim follows.  \\
		Let $\epsilon > 0$ and $k \geq 2$ be given. From \eqref{eq:Fnk} and theorem \ref{th:DDP} we get 
		\begin{align*}
			F_k(x) &= (-1)^k + \sum_{i=1}^{k} (-1)^{k-i} \binom{k}{i} 
			         \left( x P_i^D(\log x) + \bigtriangleup_i^D(x) \right)
		\end{align*}		
		and therefore $F_k(x) = x P_k^F(\log x) + \bigtriangleup_k^F(x)$ with 
		\begin{align*}
			P_k^F(t) &= \sum_{i=1}^{k} (-1)^{k-i} \binom{k}{i}  
						\sum_{j=0}^{i-1} a_{i,j} t^j  \\
			\abs*{\bigtriangleup_k^F(x)} &\leq \abs*{ \sum_{i=0}^{k} (-1)^{k-i} \binom{k}{i}  C_{\epsilon,i}^{(D)} x^{\alpha_{i} + \epsilon}  }
		\end{align*}
		for constants $C_{\epsilon,i}^{(D)} > 0$ and $x$ large enough (the term $(-1)^k$ is asymptotically negligible). Applying lemma \ref{lm:bin} (with $r=1$) to the first equation gives formula \eqref{eq:bkj} for the coefficients of the $P_k^F$-polynomial. 
		
		Defining $\beta_{k}$ as in \eqref{eq:betakj} and $C_{\epsilon,k}^{(F)} := \sum_{i=0}^{k} \binom{k}{i} C_{\epsilon,i}^{(D)}$, we get $\abs{\bigtriangleup_k^F(x)} \leq C_{\epsilon,k}^{(F)} x^{\beta_k + \epsilon}$, which proves~\eqref{eq:trf}. 
	\end{proof}
	Note that the coefficients of the leading term in the $P_k^F$-polynomial are given by $b_{k,k-1}=\frac{1}{(k-1)!}$ and therefore the leading term coincides with the main term in \eqref{eq:Hwa}.
	
	For $k=2$ and $x \leq 2 \cdot 10^7$, we found that 
	$\abs*{\bigtriangleup_2^F(x)} < 356.1$, where the maximum value was reached at
	$x_{max}      = 19\text{,}740\text{,}240$ with 
	$F_2(x_{max}) = 334\text{,}648\text{,}770$.

\bibliographystyle{plain}

\end{document}